\newtheorem{thm}{Theorem}
\newtheorem{prop}{Proposition}
\theoremstyle{remark}
\newtheorem*{rem}{Remark}
\def\[#1\]{\begin{align*}#1\end{align*}}
\newcommand{\R}{\mathbb{R}}
\newcommand{\N}{\mathbb{N}}
\def\C{\mathbb{C}}
\newcommand{\Q}{\mathbb{Q}}
\renewcommand{\L}{\mathbb{L}}
\newcommand{\M}{\mathbb{M}}
\newcommand{\ceq}{\coloneqq}
\newcommand{\eps}{\varepsilon}
\newcommand{\lrtx}[1]{\ \text{#1} \ }
\newcommand{\ltx}[1]{\ \text{#1}}
\newcommand{\I}{\mathds{1}}
\newcommand{\f}{\mathscr{F}}
\title{Generalization and New Proof for Almost Everywhere Convergence to Imply Local Convergence in Measure}
\author{Yu-Lin Chou\thanks{
\fontsize{8}{12pt}\selectfont
Author for correspondence:
Yu-Lin Chou,
Institute of Statistics,
National Tsing Hua University, Hsinchu 30013, Taiwan;
Email: 
\protect\url{y.l.chou@gapp.nthu.edu.tw}.
The author would like to gratefully acknowledge the constructive  comments obtained for the previous drafts of the present paper.
}}
\date{}
\begin{document}
\setlength{\baselineskip}{15pt} 
\maketitle

\begin{abstract}
With a new proof approach we prove in a more general setting the classical convergence theorem that almost everywhere convergence of measurable functions on a finite measure space implies convergence in measure.
Specifically,
we generalize the theorem  for the case where the codomain is a separable metric space and for the case where the limiting map is constant and the codomain is an arbitrary topological space.\\

\noindent
 \textbf{Keywords:}  almost everywhere convergence;
asymptotic statistical inference; convergence in measure;
separability\\

\noindent \textbf{MSC 2010:} 28A20; 60F05; 60F15;  62F12 
\end{abstract}

\section{Introduction}
It is a classical result that,
if $f, f_{1}, f_{2}, \dots$ are measurable $\C$-valued functions 
on a finite measure space and
if $f_{n} \to f$ almost everywhere, 
then 
$f_{n} \to f$ in measure.
The importance of the convergence theorem is fully aware.
It would be useful (and also intellectually amusing)
to prove the convergence theorem when the codomain of the maps $f, f_{1}, f_{2}, \dots$
is a metric space or even an arbitrary topological space.
This task is not trivial;
for example,
the usual proof approach, 
for $f$ constant, 
cannot deal with the case when
$\C$ is replaced with an arbitrary topological space.

We give a new proof for the convergence theorem that, to a certain extent, allows of the aforementioned generalization.
At the same time,
although an application of our generalization,
for  purposes such as a probabilistic or statistical one,
is in a sense immediate for 
``well-behaved'' maps as a probability measure is a suitably scaled finite measure,
we provide a counterexample showing  that the result does not necessarily hold if the measurability  of the involved maps is undecided;
difficulty in proving measurability is not unusual in applications such as in the context of asymptotic statistical inference, 
\textit{e.g.} establishing the measurability of a nonlinear least squares estimator in $\R^{k}$ for some integer $k \geq 1$ (Lemma 2 in Jennrich \cite{j}).

\section{Preliminaries}
Throughout, 
we fix a finite measure space $(\Omega, \f, \M)$.

Following the convention of probability theory,
we will in general write for simplicity a set of the form 
$\{ \omega \mid g(\omega) \ltx{has a given property} \}$ as
$\{g \ltx{has the property} \}$.
When written in juxtaposition with a set function,
in particular a measure or an outer measure,
the set 
$\{ g \ltx{has the property} \}$ 
will simply take the form 
$( g $
$\ltx{has the property})$.

If $S$ is a topological space with 
$\mathscr{B}_{S}$ 
the Borel sigma-algebra, 
if $f_{n}: \Omega \to S$ is $(\f, \mathscr{B}_{S})$-measurable for all $n \in \N$,
and if $f: \Omega \to S$ is constant,
then,
regarding the convergence of the sequence $(f_{n})$ to $f$, the involved notion of closeness to $f$ is understood in terms of the open subsets of $S$ that contain (the point of $S$ identified with) $f$. For example,
the definition of convergence in $\M$-measure is to be paraphrased in this case as ``for every open $G \subset S$
containing the constant identified with $f$,
we have $\M (\{ \omega \in \Omega \mid f_{n}(\omega) \notin G \}) = \M ( f_{n} \notin G ) \to 0$.''.

\section{Results}
Given a sequence of subsets of $\Omega$,
we can partition the space $\Omega$ into the limit inferior of the sequence and the limit superior of the sequence of the complements of the subsets of $\Omega$;
this observation is the fundamental proof idea.

\begin{thm}\label{main}
 Let $S$ be a topological space;
 let $f, f_{n}: \Omega \to S$ be measurable-$(\f, \mathscr{B}_{S})$ for all $n \in \N$; let $f_{n} \to f$ almost everywhere with respect to $\M$.
 i) If $S$ is in particular a separable metric space, then $f_{n} \to f$ in $\M$-measure; ii) if $f$ is in  particular a constant map,
 then $f_{n} \to f$ in $\M$-measure.
\end{thm}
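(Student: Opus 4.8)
The plan is to derive both parts from one template built on the partition $\Omega = \liminf_n A_n \cup \limsup_n A_n^c$ flagged before the statement, where $(A_n)$ is a sequence of measurable ``closeness'' sets chosen so that the almost everywhere convergence forces $\{f_n \to f\} \subseteq \liminf_n A_n$. Granting this containment, its complement $(\liminf_n A_n)^c = \limsup_n A_n^c$ is contained in an $\M$-null set, namely the exceptional set of non-convergence, so by monotonicity $\M(\limsup_n A_n^c) = 0$. Since $\bigcup_{k \geq n} A_k^c$ decreases to $\limsup_n A_n^c$ and $\M$ is \emph{finite}, continuity from above gives $\M(\bigcup_{k \geq n} A_k^c) \to 0$; as $A_n^c \subseteq \bigcup_{k \geq n} A_k^c$, we conclude $\M(A_n^c) \to 0$, which is precisely the asserted convergence in measure once $A_n^c$ is identified with the relevant ``far from $f$'' event.

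For part ii), fix an open $G \subset S$ containing the constant $c$ identified with $f$ and set $A_n \ceq f_n^{-1}(G)$, which lies in $\f$ because $f_n$ is measurable-$(\f, \mathscr{B}_S)$ and $G$ is Borel. At any point $\omega$ of convergence we have $f_n(\omega) \to c$ with $G$ a neighbourhood of $c$, so $f_n(\omega) \in G$ eventually; hence $\{f_n \to f\} \subseteq \liminf_n A_n$. The template then yields $\M(f_n \notin G) = \M(A_n^c) \to 0$, which is the paraphrased definition of convergence in $\M$-measure in the constant case. Crucially, no metric is invoked anywhere, and this is exactly what lets the argument survive on an arbitrary topological space.

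For part i), fix $\eps > 0$ and put $A_n \ceq \{d(f_n, f) < \eps\}$, where $d$ denotes the metric of $S$. The pointwise step is identical: $f_n(\omega) \to f(\omega)$ forces $d(f_n(\omega), f(\omega)) < \eps$ eventually, so again $\{f_n \to f\} \subseteq \liminf_n A_n$, and the template delivers $\M(d(f_n, f) \geq \eps) \to 0$. The one genuinely non-routine point is to check that $\omega \mapsto d(f_n(\omega), f(\omega))$ is measurable, so that $A_n \in \f$. This is where separability enters: a separable metric space is second countable, whence $\mathscr{B}_{S \times S} = \mathscr{B}_S \otimes \mathscr{B}_S$; therefore $\omega \mapsto (f_n(\omega), f(\omega))$ is measurable into $(S \times S, \mathscr{B}_{S \times S})$, and composing with the continuous (hence Borel) map $d$ makes $d(f_n, f)$ measurable.

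I expect the main obstacle to be precisely this measurability point in part i): without separability the product Borel sigma-algebra may strictly contain $\mathscr{B}_S \otimes \mathscr{B}_S$, the sets $\{d(f_n, f) < \eps\}$ need not belong to $\f$, and the template stalls before it can start. The other load-bearing hypothesis is the finiteness of $\M$, used once but essentially in the continuity-from-above passage; this is what upgrades ``$\limsup_n A_n^c$ is null'' to ``$\M(A_n^c) \to 0$'', and it is the sole place at which the theorem would break down for a general, non-finite measure.
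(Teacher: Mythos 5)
Your proof is correct, and up to its final step it coincides with the paper's: you use the very partition of $\Omega$ into $\liminf_{n} A_{n}$ and $\limsup_{n} A_{n}^{c}$ announced before the theorem, the same sets ($A_{n} = \{d(f_{n},f) < \eps\}$ for i), $A_{n} = f_{n}^{-1}(G)$ for ii)), and the same second-countability argument for the measurability of $d(f_{n},f)$. The genuine divergence is in how ``$\M(\limsup_{n} A_{n}^{c}) = 0$'' gets converted into ``$\M(A_{n}^{c}) \to 0$''. You do it by continuity from above along $\bigcup_{k \geq n} A_{k}^{c} \downarrow \limsup_{n} A_{n}^{c}$, the classical device, with the finiteness of $\M$ spent exactly there. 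The paper instead splits each event $A_{n}^{c}$ across the partition, bounds the piece meeting $\limsup_{m} A_{m}^{c}$ by $\M(\Omega) - \M(\liminf_{m} A_{m}) = 0$ (its own use of finiteness), and claims the piece meeting $\liminf_{m} A_{m}$ vanishes for all $n$ beyond a threshold $N$ defined as $\inf_{\omega \in \Omega} \inf J$ over the pointwise thresholds $J$; this would give the \emph{stronger} conclusion that $\M(A_{n}^{c})$ is exactly zero for $n \geq N$. That stronger conclusion is false in general: on $\Omega = [0,1]$ with Lebesgue measure, $f = 0$ and $f_{n} = \I_{[0,1/n]}$ satisfy the hypotheses, yet $\M(\abs{f_{n}} > 1/2) = 1/n > 0$ for every $n$. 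The defect is that the paper's $N$ is an \emph{infimum} of the pointwise thresholds where a supremum (possibly infinite) would be needed to empty that first piece. So your route is not merely a stylistic variant: the continuity-from-above step (or, equivalently, a reverse-Fatou bound showing the first piece merely tends to zero) is precisely what is required to make the paper's partition idea rigorous, at the cost of reinstating the ``usual'' mechanism the paper advertises avoiding; what you lose is nothing, since the paper's sharper-looking ``eventually exactly zero'' conclusion is unattainable anyway.
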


\begin{proof}
Let 
$\eps > 0$.

For i),
let $d$ be the separable metric on $S \times S$.
Since $d$ is continuous with respect to the product  $d$-topology,
and since the countable base property of $S$ ensures that
the map $(f_{n},f)$ is measurable with respect to the Borel sigma-algebra generated by the product $d$-topology for all $n \in \N$,
the function $d(f_{n},f)$ is measurable for all $n \in \N$.

Let
$N \in \N$
whenever 
$\liminf_{n \to \infty}\{
d(f_{n}, f) \leq \eps \}$
is empty;
otherwise,
let
$N \ceq \inf_{\omega \in \Omega} \inf J$,
where,
for every $\omega \in \Omega$,
the inner infimum extends over all $J \in \N$ such that $d(f_{j}(\omega), f(\omega)) \leq \eps$
for all $j \geq J$.
Then, 
for all $n \geq N$ we have
\[
  0 
  &\leq 
  \M (d(f_{n}, f) > \eps)\\
  &=
  \M \big(d(f_{n}, f) > \eps, 
 d(f_{m},f) \leq \eps \lrtx{for sufficiently large} m \big) + 
  \M \big(d(f_{n}, f) > \eps,
  \limsup_{m \to \infty}d(f_{m}, f) > \eps \big)\\
  &\leq
  0 + \M \big( \limsup_{m \to \infty}d(f_{m}, f) > \eps \big)\\
  &= \M(\Omega) - \M \big(d(f_{m}, f) \leq \eps \lrtx{for sufficiently  large} m \big)\\
  &\leq \M(\Omega) - \M(f_{n} \to f \ltx{pointwise})\\
  &= 0;
\]
the last equality follows from the convergence assumption.

For ii), 
we rewrite the partition of the finite measure space $\Omega$ used above as 
\[
\{ 
\liminf_{n \to \infty}\{ f_{n} \in G \}, 
\limsup_{n \to \infty}
\{ f_{n} \notin G \}
\}
\]
with $G \ni f$ 
being a given open subset of $S$. 
Then ii) follows from the main argument above with the apparent slight modification. 
\end{proof}

\begin{rem}
For the breadth of some application possibilities of Theorem 1,
we recall that many familiar function spaces can be made a separable metric space,
\textit{e.g.}
the space $\R^{\infty}$ 
(equipped with a usual product metric), 
the spaces $L_{p}(\R^{n})$ (equipped with the metric induced by the usual $L_{p}$-norm) 
for $1 \leq p < +\infty$ and $n \in \N$, 
the space $C$ of all the $\R$-valued continuous functions on $[0,1]$ (equipped with the uniform metric),
and the space $D$ of all the $\R$-valued c{\'a}dl{\'a}g\footnote{By a c{\'a}dl{\'a}g function we mean a function that has left limit and is right continuous everywhere.} 
functions on $[0,1]$ (equipped with the Skorokhod metric, which is a metric derived from the uniform metric),
are separable metric spaces.\footnote{For the separability of $L_{p}(\R^{n})$,
there is a proof given in Brezis \cite{br};
for the separability of each of the other cases, 
there is a proof contained in Billingsley \cite{b}.}
\qed
\end{rem}

If the involved maps $f,f_{n}$ are not all measurable,
or if the measurability is not obvious,
then one may try to circumvent the measurability issue via the outer measure obtained by taking for every $A \subset \Omega$ the infimum of the set $\{ \M(B) \mid B \supset A, B \in \f \}$ and consider the convergence modes in terms of the $\M$-outer measure. 
The convergence modes with respect to the $\M$-outer measure reduce to the usual modes, respectively,
whenever measurability is available.

However,
even with the $\M$-outer measure,
the first conclusion of Theorem 1  does not necessarily hold in the presence of a measurability issue.
Indeed,
a consideration over rational translations of a usual Vitali set $V$ (which is not Lebesgue measurable) in $[0,1]$, 
whose elements are the components of a tuple of the Cartesian product $\bigtimes_{A \in [0, 1]/R}A$ where the product extends over all the  elements of the quotient space 
$[0,1]/R$ with respect to the equivalence relation $R \   \subset [0,1]^{2}$ defined by declaring that $xRy$ iff $x-y \in \Q$,
with full outer measure would lead to a counterexample. 
Here we certainly acknowledge the axiom of choice.
Although the counterexample thus obtained is somewhat of a routine nature,
for clarity we still elaborate and highlight a possible construction:

\begin{prop}
There are some Borel finite measure space and some sequence of  nonmeasurable functions from the measure space to $\R$
that converges pointwise but not in the outer measure.  
\end{prop}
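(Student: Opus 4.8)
The plan is to isolate the one feature of the measurable setting that the proof of Theorem~\ref{main}~i) exploits and then to destroy it. With $E_{n} \ceq \{ |f_{n} - f| > \eps \}$ measurable, the reverse Fatou inequality on the finite space yields $\limsup_{n} \M(E_{n}) \leq \M(\limsup_{n} E_{n})$, so that an empty (or null) upper limit forces $\M(E_{n}) \to 0$; this is exactly what the convergence hypothesis supplies. For an outer measure the reverse Fatou inequality is unavailable. I would therefore manufacture indicator functions $f_{n} \ceq \I_{E_{n}}$, with limit $f \ceq 0$, whose level sets $E_{n}$ partition $\Omega$ --- so that $\limsup_{n} E_{n} = \varnothing$ and $f_{n} \to 0$ pointwise --- yet each carry a common positive outer measure, so that $\M^{*}(E_{n}) \not\to 0$ and convergence in outer measure fails.

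Concretely, I would take $\Omega \ceq [0,1)$ with $\f$ its Borel sigma-algebra and $\M$ the (finite) Lebesgue measure, which is a Borel finite measure space. Invoking the axiom of choice, let $V \subset [0,1)$ be a Vitali set, i.e. a set containing exactly one representative of each coset of $\Q$ in $\R$ intersected with $[0,1)$. Enumerating $\Q \cap [0,1)$ as $(q_{n})_{n \in \N}$, I set $E_{n} \ceq (V + q_{n}) \bmod 1$.

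The verification then splits into two independent checks. First, the translates $E_{n}$ are pairwise disjoint and cover $[0,1)$: this is the standard Vitali argument, where disjointness uses that two points of $V$ differing by a rational must coincide, and covering uses that every $x$ lies in the coset of its chosen representative. Hence each $\omega$ belongs to exactly one $E_{n}$, which gives $\limsup_{n} E_{n} = \varnothing$ and, since $f_{n}(\omega) = \I_{E_{n}}(\omega)$ is nonzero for a single index, $f_{n} \to 0$ everywhere. Second, the outer measure generated by $\M$ as in the preceding discussion agrees on $[0,1)$ with Lebesgue outer measure $\lambda^{*}$ (Borel supersets and open supersets yield the same infimum), and $\lambda^{*}$ is invariant under translation modulo $1$; therefore all the $E_{n}$ share the common value $c \ceq \lambda^{*}(V)$. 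This $c$ is strictly positive, for otherwise every $E_{n}$ would be a null --- hence measurable --- set and countable additivity would force $\M([0,1)) = \sum_{n} \M(E_{n}) = 0$, a contradiction; the same computation shows each $E_{n}$ is nonmeasurable, so each $f_{n}$ is nonmeasurable. Finally, for $0 < \eps < 1$ we have $\{ |f_{n} - f| > \eps \} = E_{n}$, whence $\M^{*}(\{ |f_{n} - f| > \eps \}) = c > 0$ for every $n$ and convergence in the outer measure is impossible.

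I expect the main obstacle to be technical bookkeeping rather than conceptual: carefully justifying the partition and the translation-invariance of $\lambda^{*}$ under the wrap-around modulo $1$, together with the identification of the generated outer measure with $\lambda^{*}$. I note that it is \emph{not} necessary to arrange $V$ to have full outer measure $c = 1$; the automatic positivity of $c$ already defeats convergence in outer measure, while full outer measure would only serve to make the failure maximal.
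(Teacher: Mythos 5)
Your proposal is correct and follows essentially the same route as the paper: a Vitali set, its rational translates modulo $1$ partitioning the interval, and the corresponding indicator functions, which converge to zero pointwise (each point lies in exactly one translate) but whose level sets all carry the same positive outer measure, so convergence in the outer measure fails. The one genuine difference is that the paper invokes the existence of a Vitali set with \emph{full} outer measure $\L^{*}(V)=1$ (a nontrivial refinement of the standard Vitali construction, cited as well known), whereas you observe that an arbitrary Vitali set suffices because $c=\L^{*}(V)$ is automatically positive --- otherwise every translate would be null, hence measurable, forcing $\L([0,1))=0$ --- which makes your version slightly more self-contained at the cost of a weaker (but still sufficient) lower bound $c>0$ in place of the paper's $\L^{*}(V_{n})=1$.
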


\begin{proof}
Consider the unit interval $[0, 1]$ equipped with Lebesgue measure $\L$ restricted to the Borel subsets of $[0,1]$. 

It is (well-)known that there is some Vitali set $V \subset [0,1]$ such that $\L^{*}(V) = 1$.
Let $\{ q_{n} \}_{n \in \N} \ceq \Q \ \cap \ ]0,1[$;
for each $n \in \N$,
let $V_{n}$
be obtained from a rational translation $V + q_{n}$ of $V$
so that $\{ V_{n} \}$ is a partition of $[0,1]$.
Then each $V_{n}$ is  not Lebesgue measurable,
and each $V_{n}$ has Lebesgue-outer measure $\L^{*}(V_{n}) = 1$.

Let $f_{n} \ceq \I_{V_{n}}$  on $[0, 1]$
for each $n \in \N$;
then each $f_{n}$ is not Lebesgue measurable,
and $f_{n} \to 0$ pointwise. 
In particular,
(if informative) we have $f_{n} \to 0$ 
almost everywhere with respect to both $\L$ and $\L^{*}$.

However, 
we have $\L^{*}(|f_{n}| > \eps) = \L^{*}(V_{n}) = 1$ for all $n \in \N$ and all $0 < \eps < 1$;
so the sequence $(f_{n})$ does not converge in $\L^{*}$ to the zero function.
\end{proof}

Since the measure space $[0,1]$ considered in the above proof can be viewed as the probability space describing the uniform distribution concentrated on $[0, 1]$,
Proposition 1 has a probabilistic interpretation and hence is not terribly  artificial.

Proposition 1 prevents one from quickly generalizing Theorem 1,
which assumes the absence of a measurability issue,
to cover the case where a measurability issue is of concern.

It would be interesting to ask to what extent Theorem 1 still persists under indefinite measurability.

\end{document}